\newtheorem{theorem}{Theorem}[section]
\newtheorem{lemma}[theorem]{Lemma}
\newtheorem{proposition}[theorem]{Proposition}
\theoremstyle{definition}
\newtheorem{remark}[theorem]{Remark}
\newcommand{\ec}{\exp^{\ast}}
\numberwithin{equation}{section}
\begin{document}
\title[On the estimate $M(x)=o(x)$]{On the estimate $M(x)=o(x)$ for Beurling generalized numbers}

\author[J.~Vindas]{Jasson Vindas} 
\thanks{This work was partially supported by the Research
  Foundation--Flanders, through the FWO-grants 1510119N and G067621N} 
\address{Department of Mathematics: Analysis, Logic and Discrete Mathematics\\ Ghent University\\
  Krijgslaan 281\\ B 9000 Ghent\\ Belgium} 
\email{jasson.vindas@UGent.be}

\subjclass[2020]{11N80}
\keywords{Beurling generalized numbers; Beurling generalized primes; mean-value vanishing of the
  M\"{o}bius function; prime number
  theorem; PNT equivalences} 

\begin{abstract} We show that the sum function of the M\"{o}bius function of a  Beurling number system must satisfy the asymptotic bound $M(x)=o(x)$ if it satisfies the prime number theorem and its prime distribution function arises from a monotone perturbation of either the classical prime numbers or the logarithmic integral.

\end{abstract}

\maketitle

\begin{center}

\emph{Dedicated to the memory of Jean-Pierre Kahane and Wen-Bin Zhang}

\end{center}

\bigskip

\section{Introduction}  \label{Sec Intro}
In analytic number theory, the Prime Number Theorem (PNT)
$$
\pi(x)\sim \frac{x}{\log x}, \qquad x\to \infty,
$$
is considered to be equivalent to the bound 
\begin{equation}
\label{Meeq1}
M(x)=o(x), \qquad x\to \infty,
\end{equation} 
where $M$ stands for the sum function of the M\"{o}bius function, in the sense that they are deducible from one another by quite simple real variable arguments (cf. \cite{diamond82,landau}). The counterparts of this and several other classical ``PNT equivalences'' for Beurling generalized numbers  have been thoroughly investigated  in the past decade \cite{d-d-v,d-m-v,d-vPNTequiv2017, d-z2012, diamond-zhangbook}; see also the survey articles \cite{diamond2023,d-z2017}. It has been discovered that various of the different implications may fail without extra hypotheses, and conditions have also been determined under which the implications do or do not hold in the Beurling context. 

Based on an example of Beurling \cite{beurling1937}, Zhang observed \cite{zhang87} that the Beurling analog of \eqref{Meeq1} does not necessarily imply the PNT for Beurling primes. In the converse direction, knowing whether \eqref{Meeq1} is an unconditional consequence of the PNT is an important \emph{open question} in the theory of Beurling numbers. 

According to \cite[Theorem 2.1]{d-d-v}, the PNT does imply \eqref{Meeq1} if one additionally assumes that the generalized integer counting function $N$ satisfies the bound $N(x)\ll x$. Therefore, if one would like to attempt to disprove the implication ``PNT $\Rightarrow$ \eqref{Meeq1}'', one would need to seek counterexamples among those number systems for which $N(x)/x$ is unbounded. This observation in combination with \cite[Proposition~6.1, p.~60]{diamond-zhangbook} led Kahane to propose\footnote{Orally communicated in April 2017 during an author's visit to Jean-Pierre Kahane and Eric Sa\"{i}as in Paris.} the absolutely continuous prime measure\footnote{Even if one would mainly be interested in discrete Beurling number systems, considering ``continuous analogs'' is a useful device for the construction of counterexamples in Beurling number theory, as they provide flexible templates that could usually be discretized by means of a number of available techniques (cf. \cite{b-v2024,d-s-v,diamond-zhangbook}). So, in a broad sense \cite{beurling1937,diamond-zhangbook}, a Beurling generalized number system is merely a pair of non-decreasing right continuous functions $N$ and $\Pi$ with $N(1)=1$ and $\Pi(1)=0$, both having support in $[1,\infty)$, and subject to the relation $\mathrm{d}N=\exp^{\ast}(\mathrm{d}\Pi)$.} (we write $\log_{k}x= \log (\log_{k-1} x)$)
\begin{equation}
\label{Meeq2}
\mathrm{d}\Pi_{\text{K}}(u)= \frac{1-u^{-1}}{\log u} \mathrm{d}u+ \frac{\chi_{[e^{e}, \infty)}(u)}{\log u \log_{2} u}  \mathrm{d}u
\end{equation}
as a possible candidate to refute ``PNT $\Rightarrow$ \eqref{Meeq1}''. In fact, we obviously have $\Pi_{\text{K}}(x)\sim x/\log x $, while the quoted result from \cite{diamond-zhangbook} yields $N_{\text{K}}(x)/x\to\infty$ for its associated generalized integer distribution function, determined via the relation $\mathrm{d}N_{K}=\ec(\mathrm{d}\Pi_{\text{K}})$ where the exponential is taken with respect to  the (multiplicative) convolution of measures \cite{diamond-zhangbook}. The analog of the sum of the M\"{o}bius function in this case is $M_{K}$ with $\mathrm{d}M_{K}$ defined as the convolution inverse of $\mathrm{d}N_{\text{K}}$, or equivalently, $\mathrm{d}M_{\text{K}}=\ec(-\mathrm{d}\Pi_{\text{K}})$.

We have found that $M_{K}(x)=o(x)$; consequently, the prime measure \eqref{Meeq2} fails to deliver a counterexample for ``PNT $\Rightarrow$ \eqref{Meeq1}''. This follows at once from the following theorem, which is our main result. We mention that primitives of all  measures considered in this article are right-continuous and supported on the interval $[1,\infty)$.

\begin{theorem}\label{Meth1}
Suppose the (signed) measure $\mathrm{d}\Pi$ can be written as
$$
\mathrm{d}\Pi= \mathrm{d}\Pi_{0}+ \mathrm{d}E+ \mathrm{d}R ,
$$
where the measures in this expression satisfy:
\begin{equation}\label{Meeq3} M_0(x)=\int_{1^{-}}^{x}\ec(-\mathrm{d}\Pi_{0})= O\left(\frac{x}{\log^{a}x}\right), \qquad \mbox{for some } a>0,
\end{equation}
\begin{equation}\label{Meeq4} \int_{1^{-}}^{x}|\mathrm{d}E(u)|= o\left(\frac{x}{\log x}\right),
\end{equation}
and
\begin{equation}\label{Meeq5} \int_{1^{-}}^{\infty}\frac{|\mathrm{d}R(u)|}{u}< \infty.
\end{equation}
Then, $M(x)= \int_{1^{-}}^{x}\ec(-\mathrm{d}\Pi)=o(x)$.

\end{theorem}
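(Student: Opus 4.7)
The proof plan rests on the convolution factorisation
$$dM = dM_{0} * \ec(-dE) * \ec(-dR),$$
obtained from the fact that $\ec$ converts sums of (signed) measures into multiplicative convolutions. I shall dispose of the factors $\ec(-dR)$ and $\ec(-dE)$ in two separate steps.

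\emph{Treatment of $\ec(-dR)$.} Submultiplicativity of the logarithmic total variation under multiplicative convolution, combined with hypothesis \eqref{Meeq5}, gives
$$\int_{1^{-}}^{\infty}\frac{|\ec(-dR)|(u)}{u} \;\le\; \exp\!\Bigl(\int_{1^{-}}^{\infty}\frac{|dR|(u)}{u}\Bigr) \;<\; \infty.$$
An Axer-type splitting argument then shows that multiplicative convolution with any measure of finite logarithmic total variation preserves the property ``sum function is $o(x)$'': given $\mu(x)=o(x)$ and $\int|d\lambda|(u)/u<\infty$, fix $\epsilon>0$ and $Y$ with $|\mu(y)|\le \epsilon y$ for $y\ge Y$, then split $\int_{1}^{x}\mu(x/u)\,d\lambda(u)$ at $u=x/Y$; the low-$u$ range gives $\le \epsilon x\int|d\lambda|/u$ and the high-$u$ range is bounded by $\sup_{[1,Y]}|\mu|\cdot x\int_{x/Y}^{\infty}|d\lambda|/u = o(x)$. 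Applying this with $d\lambda=\ec(-dR)$ reduces the theorem to proving $(dM_{0}*\ec(-dE))([1,x])=o(x)$.

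\emph{Treatment of $\ec(-dE)$.} Let $V(x):=|dE|([1,x])=o(x/\log x)$. Expanding $\ec(-dE) = \delta_1 + \sum_{k\ge 1}(-1)^k dE^{*k}/k!$, the $k=0$ term is $M_{0}(x) = o(x)$ from \eqref{Meeq3}. For $k=1$, I split $\int_{1}^{x}M_{0}(x/u)\,dE(u)$ at $u=\sqrt{x}$. On $[1,\sqrt{x}]$, the bound $|M_{0}(x/u)|\le 2^{a}Cx/(u\log^{a} x)$ pairs, via Abel summation on \eqref{Meeq4}, with $\int_{1}^{\sqrt{x}}|dE|(u)/u\le C' + \epsilon\log\log x$ to yield $O(x\log\log x/\log^{a} x)=o(x)$; on $[\sqrt{x},x]$, using $|M_{0}(x/u)|\ll x/u$ together with $\int_{\sqrt{x}}^{x}|dE|(u)/u = o(1)$ (also by Abel summation) gives $o(x)$.

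\emph{The main obstacle.} The principal difficulty will be uniform control of the higher-order terms $k\ge 2$ and absolute convergence of the Taylor expansion: the crude bound $|dE|^{*k}([1,x])\le V(x)^{k}$ sums to $e^{V(x)}$, which diverges since $V(x)\to\infty$ in general. To circumvent this I would truncate $dE = dE_{T} + dE^{T}$ with $dE_{T} = dE|_{[1,T]}$ of finite total variation (so $|\ec(-dE_{T})|([1,\infty))\le e^{V(T)} <\infty$, with super-exponential-in-$\log u$ tail decay). The convolution $dM_0 * \ec(-dE_T)$ still satisfies a bound of type $O_T(x/\log^a x)$, by combining that rapid tail decay with the Mellin/size bound on $M_{0}$ and the splitting at $\sqrt x$ as above. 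The residual factor $\ec(-dE^T)$ then has ``prime content'' satisfying $|dE^T|([1,x])\le \epsilon\, x/\log x$ for $x\ge T$, with $\epsilon$ as small as desired by choosing $T$ large, so one iterates the first-order argument with the small parameter $\epsilon$ absorbing the accumulated error. Letting $\epsilon\to 0$ after $x\to\infty$ closes the proof.
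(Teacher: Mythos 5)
Your reduction to the case $\mathrm{d}R=0$ is correct and coincides with the paper's Lemma \ref{Mel1} (the bound $\int_{1^{-}}^{\infty}|\ec(-\mathrm{d}R)|(u)u^{-1}\le\exp\bigl(\int_{1^{-}}^{\infty}|\mathrm{d}R|(u)u^{-1}\bigr)$ plus the Axer-type splitting), and your $k=0$ and $k=1$ terms are handled correctly. The gap is exactly where you place it, and the proposed resolution does not close it. After truncation, the object you must control is the full variation measure $\ec(|\mathrm{d}E^{T}|)$, and two different kinds of information are needed: an averaged bound $\int_{1^{-}}^{x}u^{-1}\ec(|\mathrm{d}E^{T}|)(\mathrm{d}u)\ll\log^{\varepsilon}x$ (this one does follow from your setup, since $\int_{1^{-}}^{x}u^{-1}|\mathrm{d}E^{T}|(u)\ll\epsilon\log_{2}x$ exponentiates to $\log^{C\epsilon}x$), and a \emph{pointwise} bound on the mass of $\ec(|\mathrm{d}E^{T}|)$ near $u=x$. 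The averaged bound alone is insufficient: in the range $\sqrt{x}\le u\le x$ of your splitting one only has $\sup_{y\le\sqrt{x}}|M_{0}(y)|/y=O(1)$, while $\int_{\sqrt{x}}^{x}u^{-1}\ec(|\mathrm{d}E^{T}|)(\mathrm{d}u)$ is a priori only bounded by $\log^{C\epsilon}x$ --- unlike the $k=1$ case, where the pointwise hypothesis $|\mathrm{d}E|([1,y])=o(y/\log y)$ gives $\int_{\sqrt{x}}^{x}|\mathrm{d}E|(u)u^{-1}=o(1)$ --- so ``iterating the first-order argument'' does not literally apply. To make your route work one must prove an inductive bound of the shape $|\mathrm{d}E^{T}|^{\ast k}([1,x])\ll(C\epsilon)^{k}x(\log_{2}x)^{k-1}/\log x$ with $C$ independent of $k$, check it survives the region $x/u=O(1)$, sum against $1/k!$ to get $\ec(|\mathrm{d}E^{T}|)([1,x])-1\ll x\log^{C\epsilon-1}x$, and then run the final convolution against $M_{0}$ dyadically using the pointwise decay $|M_{0}(y)|\ll y/\log^{a}(e+y)$ rather than $|M_{0}(y)|\ll y$ in the upper range. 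That chain of estimates is the entire technical content of the theorem and is not carried out in the proposal.

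The paper avoids all of this with one structural idea absent from your plan: apply the derivation $L\mathrm{d}A=\log u\,\mathrm{d}A$ to $\mathrm{d}M^{\pm}=\mathrm{d}M_{0}\ast\ec(\pm\mathrm{d}E)$ and bound $\int_{1^{-}}^{x}\log u\,\mathrm{d}M^{\pm}(u)\ll x\log^{1-b}x$, recovering $M^{\pm}(x)\ll x/\log^{b}x$ by partial summation. The extra factor $\log x$ of room is precisely what absorbs the uncontrollable $O(x\log^{\varepsilon}x)$ mass of $\ec(|\mathrm{d}E|)$ near $u=x$. The two inputs it needs --- $\int_{1^{-}}^{x}u^{-1}\ec(|\mathrm{d}E|)(\mathrm{d}u)\ll\log^{\varepsilon}x$ and $\int_{1^{-}}^{x}\log u\,\ec(|\mathrm{d}E|)(\mathrm{d}u)\ll x\log^{\varepsilon}x$ --- come in a few lines from the Mellin identity $\int u^{-\sigma}\ec(\mathrm{d}E)=\exp\bigl(\int u^{-\sigma}\mathrm{d}E\bigr)=\exp\bigl(o(\log\tfrac{1}{\sigma-1})\bigr)$ and the Chebyshev identity \eqref{Meeq2.2}, with no truncation, no term-by-term treatment of the exponential series, and no induction on $k$ (see Lemma \ref{Mel2} and Proposition \ref{Mep2}). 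I would either adopt that mechanism or write out your induction on $k$ in full with explicit, $k$-uniform constants.
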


Theorem \ref{Meth1} is applicable when $\Pi_0$ is Riemann's counting function of the classical rational prime numbers $\Pi_{0}(x)=\sum_{k=1}^{\infty} (1/k)\pi(x^{1/k})= \sum_{p^{k}\leq x} (1/k)$, or when it is the (normalized) logarithmic integral $\Pi_0(x)= \int_{1}^{x} (1-u^{-1}) \mathrm{d}u/\log u$. In particular, any example of a prime measure constructed as a monotone perturbation of either of these prime distribution functions and for which the PNT holds will always fail to disprove ``PNT$\Rightarrow$\eqref{Meeq1}''.

We give a proof of Theorem \ref{Meth1} in Section \ref{Sec 3 Me}, after discussing some preliminaries on convolution of measures in Section \ref{Preli Me}. We shall close this article by exhibiting a better bound than $M_{K}(x)=o(x)$ for Kahane's example. The next proposition will be shown in Section \ref{Sec 4 Me}.
\begin{proposition}
\label{Mep1} We have the bound
\[M_{K}(x)=o\left(\frac{x }{\log x }\right).\]
\end{proposition}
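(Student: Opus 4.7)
The plan is to exploit the decomposition $\mathrm{d}\Pi_K=\mathrm{d}\Pi_0+\mathrm{d}E$, where $\mathrm{d}\Pi_0(u)=(1-u^{-1})\,\mathrm{d}u/\log u$ and $\mathrm{d}E(u)=\chi_{[e^e,\infty)}(u)\,\mathrm{d}u/(\log u\log_{2} u)$. The key point is that for this specific $\Pi_0$ the integer and M\"obius measures admit elementary closed forms: using the Frullani identity $\int_1^\infty(u^{-s}-u^{-s-1})\,\mathrm{d}u/\log u=\log(s/(s-1))$ one has $\widehat{\mathrm{d}\Pi_0}(s)=\log(s/(s-1))$, so $\mathrm{d}N_0=\delta_1+\mathrm{d}u$ on $[1,\infty)$ (hence $N_0(x)=x$) and $\mathrm{d}M_0=\delta_1-\mathrm{d}u/u$, i.e.\ $M_0(x)=1-\log x$ for $x\geq 1$.

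Writing $\mathrm{d}M_E=\ec(-\mathrm{d}E)$, the factorization $\mathrm{d}M_K=\mathrm{d}M_0\ast\mathrm{d}M_E$ gives
\[
M_K(x)=\int_{1^-}^x M_0(x/u)\,\mathrm{d}M_E(u)=M_E(x)-\int_{1^-}^x\log(x/u)\,\mathrm{d}M_E(u).
\]
A Stieltjes integration by parts (boundary contributions vanish since $\log(x/u)|_{u=x}=0$ and $M_E(1^-)=0$) converts the second integral into $\int_1^x M_E(u)/u\,\mathrm{d}u$, yielding the key identity
\[
M_K(x)=M_E(x)-\int_1^x\frac{M_E(u)}{u}\,\mathrm{d}u.
\]

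This identity reduces matters to establishing $M_E(x)=o(x/\log x)$: granted such a bound, for any $\varepsilon>0$ one has $|M_E(u)|<\varepsilon u/\log u$ for $u$ large enough, and hence $\bigl|\int_1^x M_E(u)/u\,\mathrm{d}u\bigr|\le\varepsilon\int^x\mathrm{d}u/\log u+O_\varepsilon(1)=(\varepsilon+o(1))x/\log x$, so the identity gives $M_K(x)=o(x/\log x)$. The naive bound $|\mathrm{d}M_E|\le\ec(\mathrm{d}E)=\mathrm{d}N_E$ together with $N_E(x)\asymp x/\log x$ (from $\widehat{\mathrm{d}E}(1+\varepsilon)=\log_{2}(1/\varepsilon)+O(1)$ and therefore $\widehat{\mathrm{d}N_E}(s)\asymp\log(1/(s-1))$ as $s\to 1^+$) only delivers $|M_E(x)|\ll x/\log x$, which is precisely the magnitude at which the two terms in the identity sit, so pointwise cancellation is essential.

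The main obstacle is thus the sub-leading estimate $M_E(x)=o(x/\log x)$. I would establish it via Tauberian/complex-analytic methods applied to $\widehat{\mathrm{d}M_E}(s)=\ec(-\widehat{\mathrm{d}E}(s))$, which extends continuously to the line $\Re s=1$ and vanishes at $s=1$ at the explicit rate $\asymp 1/\log(1/(s-1))$; a suitable Wiener--Ikehara-type theorem then produces the required decay. Equivalently, one may work directly with $M_K$: the identity just derived corresponds on the Mellin side to $\widehat{\mathrm{d}M_K}(s)=(s-1)/s\cdot\widehat{\mathrm{d}M_E}(s)$, whose extra factor $(s-1)$ encodes the cancellation, and Perron's formula with careful contour shifting past $s=1$ should yield $M_K(x)=o(x/\log x)$ directly. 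This Tauberian / Mellin step is the technical heart of the proof; the rest is routine manipulation.
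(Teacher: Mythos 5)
Your reduction is sound and is essentially the one the paper itself uses: the closed form $\mathrm{d}M_0=\delta_1-\mathrm{d}u/u$ and your identity $M_K(x)=M_E(x)-\int_1^x M_E(u)u^{-1}\,\mathrm{d}u$ are equivalent to Lemma~\ref{Mel4.1}, and both reduce the proposition to the single estimate $M_E(x)=B^{-}(x)=o(x/\log x)$, where $\mathrm{d}M_E=\ec(-\mathrm{d}E)$ is the paper's $\mathrm{d}B^{-}$. But that estimate is the entire content of the proof, and you have not supplied it. The gap is not a routine Tauberian step. First, $\mathrm{d}M_E$ is a signed measure, so Wiener--Ikehara-type theorems do not apply to it directly; some positivity or Tauberian condition has to be manufactured. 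Second, the data you propose to feed into such a theorem --- continuous boundary values of $\widehat{\mathrm{d}M_E}$ on $\Re s=1$ together with vanishing at $s=1$ at rate $1/\log(1/(s-1))$ --- is first-order information and at best yields $M_E(x)=o(x)$ or $O(x/\log x)$; extracting the extra factor $o(1/\log x)$ requires second-order control of the boundary behavior. Finally, there is no analytic continuation past $s=1$: the singularity of $\widehat{\mathrm{d}E}$ there is of $\log\log$ type and sits genuinely on the line, so ``contour shifting past $s=1$'' in a Perron argument is not available.

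The paper's resolution of exactly this difficulty is the nontrivial part you are missing. It first proves $S(x):=\int_{1^-}^x u^{-1}\mathrm{d}B^{-}(u)=o(1)$ (Lemma~\ref{Mel4.3}) by a symmetrization trick: the measure $\mathrm{d}B^{+}+\mathrm{d}B^{-}=2\cosh^{\ast}(\mathrm{d}E)$ is non-negative, its Mellin transform agrees with that of $\mathrm{d}B^{+}$ up to $o(1)$ as $\sigma\to1^+$, and de Haan's \emph{second-order} regular variation theory (not plain Karamata or Wiener--Ikehara) gives the matching expansions $b_1\log_2 x+b_2+b_1\gamma+o(1)$ for both $\int_{1^-}^x u^{-1}\mathrm{d}B^{+}(u)$ and $\int_{1^-}^x u^{-1}(\mathrm{d}B^{+}+\mathrm{d}B^{-})(u)$, whence $S(x)=o(1)$ by subtraction. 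It then upgrades this to $B^{-}(x)=o(x/\log x)$ via the Chebyshev identity $L\,\ec(-\mathrm{d}E)=-(L\mathrm{d}E)\ast\ec(-\mathrm{d}E)$ and integration by parts. Unless you can substitute a complete Tauberian argument of comparable strength for the step you have deferred, your proposal establishes only the easy half of the proposition.
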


The author thanks Frederik Broucke, Gregory Debruyne, and Harold G. Diamond  for their useful comments.

\section{Preliminaries on convolution calculus}\label{Preli Me}
 Our notation for operational calculus with Lebesgue-Stieltjes measures is the same as in \cite{diamond-zhangbook}, but let us briefly review some concepts for the sake of convenience. Integration $\int_{1^{-}}^{x}$ has the meaning $\int_{[1,x]}$. The convolution of $\mathrm{d}A$ and $\mathrm{d}B$ is determined via
$$
\int_{1^{-}} ^{x} \mathrm{d}A \ast \mathrm{d}B= \iint_{1\leq uv\leq  x} \mathrm{d}A (u) \mathrm{d}B(v)= \int_{1^{-}} ^{x} A\left(\frac{x}{u}\right)\mathrm{d}B(u)=\int_{1^{-}} ^{x} B\left(\frac{x}{u}\right)\mathrm{d}A(u).
$$
The variation measure of $\mathrm{d}A$ is denoted as $|\mathrm{d}A|$, a notation that was already used in the Introduction.

We shall make use of the operator $L$ of multiplication by $\log$, namely,
$$
(L\mathrm{d}A)(u)= \log u \: \mathrm{d}A(u).
$$
The relevance of $L$ for us is that it is a derivation in the convolution algebra of measures, that is, 
\begin{equation}
\label{Meeq2.1}
L(\mathrm{d}A\ast \mathrm{d}B)= (L\mathrm{d}A)\ast \mathrm{d}B+ \mathrm{d}A\ast (L \mathrm{d}B).
\end{equation}

Applying $L$ to the exponential of a measure $\ec(\mathrm{d}A)=\sum_{n=0}^{\infty} (1/n!) \mathrm{d}A^{\ast n}$ yields \cite[p.~27]{diamond-zhangbook}
\begin{equation}
\label{Meeq2.2}
L\ec(\mathrm{d}A)= (L \mathrm{d}A)\ast \ec(\mathrm{d}A) .
\end{equation}
The reader should think of \eqref{Meeq2.2} as a general form of Chebyshev's classical identity from elementary prime number theory\footnote{That is, the identity $\sum_{n\leq x}\log n=\sum_{n\leq x}\psi(x/n)$, where $\psi$ is the Chebyshev function.}. 

We finally recall the exponential law for convolution: 
\[\ec(\mathrm{d}A+\mathrm{d}B)=\ec(\mathrm{d}A)\ast \ec(\mathrm{d}B).\]
\section{Proof of Theorem \ref{Meth1}}\label{Sec 3 Me}

We divide the proof of Theorem \ref{Meth1} into several intermediate results.
\begin{lemma}\label{Mel1} Suppose $\mathrm{d}G_{1}$ and $\mathrm{d}G_2$ are such that 
\begin{equation}
\label{Meeq3.0}
\int_{1^{-}}^{\infty}\frac{|\mathrm{d}(G_1-G_2)(u)|}{u}<\infty
\end{equation}
and let $\mathrm{d}M_{j}=\ec(\mathrm{d}G_j)$, $j=1,2$. Then, 
$$
M_1(x)=o(x) \quad \mbox{if and only if} \quad M_2(x)=o(x). $$
\end{lemma}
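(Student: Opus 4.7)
The plan is to reduce the statement to the symmetric claim that convolving by a measure which is integrable against $1/u$ preserves the estimate $o(x)$. Set $\mathrm{d}H= \mathrm{d}G_{1}-\mathrm{d}G_{2}$ and $\mathrm{d}F=\ec(\mathrm{d}H)$. By the exponential law recalled at the end of Section \ref{Preli Me},
$$ \mathrm{d}M_{1}=\ec(\mathrm{d}G_{2}+\mathrm{d}H)=\mathrm{d}M_{2}\ast \mathrm{d}F, $$
and exchanging the roles of $G_{1}$ and $G_{2}$ (replacing $\mathrm{d}H$ by $-\mathrm{d}H$) gives the reverse relation $\mathrm{d}M_{2}=\mathrm{d}M_{1}\ast \ec(-\mathrm{d}H)$. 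Hence it suffices to prove that if $M_{2}(x)=o(x)$ and $\int_{1^{-}}^{\infty}|\mathrm{d}F(u)|/u<\infty$, then $M_{1}(x)=o(x)$.

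The first technical step is the finiteness $\int_{1^{-}}^{\infty}|\mathrm{d}F(u)|/u<\infty$. Since $1/u$ is multiplicative on $[1,\infty)$, Fubini's theorem yields
$$ \int_{1^{-}}^{\infty}\frac{|\mathrm{d}H^{\ast n}|(u)}{u}\leq \int_{1^{-}}^{\infty}\frac{(|\mathrm{d}H|^{\ast n})(u)}{u} = \left(\int_{1^{-}}^{\infty}\frac{|\mathrm{d}H|(u)}{u}\right)^{\!n}, $$
so summing the series defining $\ec(\mathrm{d}H)$ gives $\int |\mathrm{d}F|/u \leq \exp\!\bigl(\int |\mathrm{d}H|/u\bigr)$, which is finite by \eqref{Meeq3.0}.

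The second step is a standard split-and-truncate argument. Write
$$ M_{1}(x)=\int_{1^{-}}^{x} M_{2}(x/u)\,\mathrm{d}F(u). $$
Given $\varepsilon>0$, pick $T$ with $\int_{T}^{\infty}|\mathrm{d}F(u)|/u<\varepsilon$. On the range $1\leq u\leq T$, $x/u\geq x/T\to\infty$, so $|M_{2}(x/u)|\leq \eta(x/T)\,(x/u)$ where $\eta(y):=\sup_{z\geq y}|M_{2}(z)|/z\to 0$; this contributes at most $\eta(x/T)\,x\cdot\int_{1^{-}}^{T}|\mathrm{d}F|/u = o(x)$. On $T<u\leq x$, we use the crude global bound $|M_{2}(y)|\leq Cy$ (valid because $M_{2}(y)/y\to 0$ and $M_{2}$ is right-continuous with $M_{2}(1)=1$, hence bounded on each $[1,Y]$), which contributes at most $Cx\int_{T}^{\infty}|\mathrm{d}F|/u\leq C\varepsilon x$. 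Letting $x\to\infty$ first and then $\varepsilon\to 0$ yields $M_{1}(x)=o(x)$.

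The only mildly subtle point is the first step: verifying that $1/u$-integrability is preserved by convolutional exponentiation. Once the multiplicativity of $1/u$ converts the convolution into a product and bounds $\int|\mathrm{d}F|/u$ by an exponential, the remainder of the proof is a routine Tauberian-style truncation. No further hypotheses on $G_{1},G_{2}$ (such as monotonicity or support of the individual measures, beyond being supported in $[1,\infty)$) are needed.
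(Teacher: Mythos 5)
Your proposal is correct and follows essentially the same route as the paper: reduce by symmetry to one implication, write $\mathrm{d}M_1=\mathrm{d}M_2\ast\ec(\mathrm{d}H)$ via the exponential law, verify $\int_{1^{-}}^{\infty}|\ec(\mathrm{d}H)(u)|/u\leq\exp\bigl(\int_{1^{-}}^{\infty}|\mathrm{d}H(u)|/u\bigr)<\infty$, and conclude by inserting $M_2(x/u)=o(x/u)$ into the convolution integral. The only difference is that you spell out the split-and-truncate justification of the final $o(x)$ step, which the paper leaves implicit.
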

\begin{proof} Since the statement is symmetric, we just need to show that $M_1(x)=o(x)$ implies $M_2(x)=o(x)$. Set $\mathrm{d}R=\mathrm{d}G_2-\mathrm{d}G_1$. Our hypothesis is that \eqref{Meeq5} holds, which in turn yields
$$
\int_{1^{-}}^{\infty} \frac{|\ec(\mathrm{d}R(u))|}{u}\leq  \int_{1^{-}}^{\infty} \frac{\ec(|\mathrm{d}R(u)|)}{u}=\exp \left(\int_{1^{-}}^{\infty} \frac{|\mathrm{d}R(u)|}{u}\right)<\infty.
$$
Hence,
\begin{align*}
M_2(x)&=\int_{1^{-}}^{x} \mathrm{d}M_1\ast \ec(\mathrm{d}R)=\int_{1^{-}}^{x} M_1\left(\frac{x}{u}\right)\ec(\mathrm{d}R(u))
\\
&= \int_{1^{-}}^{x} o\left(\frac{x}{u}\right)\ec(\mathrm{d}R(u))= o(x).
\end{align*}\end{proof}

In view of Lemma \ref{Mel1}, we may assume that $R=0$ in Theorem \ref{Meth1}. Using the Jordan decomposition of $\mathrm{d}E$, Theorem \ref{Meth1} would be established if we are able to show the next proposition.

\begin{proposition}
\label{Mep2} Let $\mathrm{d}E\geq0$ and $\mathrm{d}\Pi_0$  be measures such that \eqref{Meeq3} and \eqref{Meeq4} hold. Then, for each $0<b<\min\{a,1\}$,
\begin{equation}
\label{Meeq3.1} M^{\pm}(x)=\int_{1^{-}}^{x} \ec(-\mathrm{d}\Pi_0\pm \mathrm{d}E)=  O\left(\frac{x}{\log^{b}x}\right).
\end{equation}
\end{proposition}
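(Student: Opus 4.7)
My plan is to use the exponential law $\ec(-\mathrm{d}\Pi_0\pm\mathrm{d}E) = \mathrm{d}M_0\ast \ec(\pm\mathrm{d}E)$ to write $M^{\pm} = M_0\ast\ec(\pm \mathrm{d}E)$. Since $\mathrm{d}E\geq 0$, the total-variation measure of $\ec(\pm\mathrm{d}E)$ is pointwise dominated by the positive measure $F_+ := \ec(\mathrm{d}E)$, which gives
$$ |M^{\pm}(x)| \leq \int_{1^-}^{x}|M_0(x/u)|\,\mathrm{d}F_+(u). $$

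Next I would derive the quantitative control of $F_+$ needed. From \eqref{Meeq4}, partial integration yields the Chebyshev-function estimate $\psi_E(x) := \int_1^x \log u\,\mathrm{d}E(u) = o(x)$, and Chebyshev's identity $L F_+ = (L\mathrm{d}E)\ast F_+$ (see \eqref{Meeq2.2}) together with a Beurling-type argument produces the linear upper bound $F_+(x) = O(x)$. Partial summation then yields the Mertens-type bound $\int_{1^-}^{x}\mathrm{d}F_+(u)/u = O(\log x)$.

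I would then split the convolution at a threshold $u=x/T$, with $T = T(x)\to\infty$ to be tuned at the end. On the far range $u\leq x/T$, \eqref{Meeq3} gives $|M_0(x/u)|\leq C(x/u)(\log T)^{-a}$, which together with the Mertens estimate yields a far contribution of $O(x\log x / (\log T)^a)$. On the near range $u\in(x/T,x]$, one must exploit the \emph{signed} structure of $\mathrm{d}M_0$ rather than only its absolute value, since a direct use of $|M_0|$ combined with the $O(x)$ bound on $F_+$ produces an unavoidable $O(x)$ boundary term in any integration by parts --- too weak to match the far estimate.

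The main obstacle is precisely this near-range estimate: a refined integration by parts using the cancellations inherent in $M_0$, combined with a secondary Chebyshev-type analysis of $F_+$ on short intervals (drawing again on the hypothesis \eqref{Meeq4}), is required to produce a near contribution of size compatible with the far bound. Once these estimates are in place, a suitable choice of the parameter $T$ balances the two contributions and delivers $|M^{\pm}(x)| = O(x/(\log x)^b)$ for every $b<\min\{a,1\}$.
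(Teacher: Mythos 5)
Your proposal has a genuine gap, and it stems from underusing the hypothesis \eqref{Meeq4}. You extract from $\int_{1^-}^x|\mathrm{d}E|=o(x/\log x)$ only the bounds $F_+(x)=O(x)$ and $\int_{1^-}^x \mathrm{d}F_+(u)/u=O(\log x)$; these are the bounds one would get from the weaker hypothesis $E(x)=O(x/\log x)$, and they are too weak for the conclusion. With them your far-range contribution is $O(x\log x/(\log T)^a)$, and since $T\leq x$ forces $\log T\leq\log x$, this can only be $O(x/\log^b x)$ when $a\geq 1+b>1$ --- but the proposition must be proved for all $a>0$ (and one may as well assume $a\leq1$). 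The $o$ in \eqref{Meeq4} is essential: it gives $\int_{1^-}^\infty \mathrm{d}E(u)/u^\sigma=o(\log\frac{1}{\sigma-1})$, hence $\int_{1^-}^\infty \mathrm{d}F^+(u)/u^\sigma=\exp(o(\log\frac{1}{\sigma-1}))\ll(\sigma-1)^{-\varepsilon}$ for \emph{every} $\varepsilon>0$, and an elementary Tauberian step (take $\sigma=1+1/\log x$) yields $\int_{1^-}^x\mathrm{d}F^+(u)/u\ll\log^\varepsilon x$ and, via Chebyshev's identity, $\int_{1^-}^x\log u\,\mathrm{d}F^+(u)\ll x\log^\varepsilon x$. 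Without this saving of a full power of $\log$ your scheme cannot close.

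The second problem is that your near-range estimate is not actually supplied: you correctly identify it as the main obstacle and then only describe what kind of argument ``is required.'' No such exploitation of sign cancellations in $\mathrm{d}M_0$ is needed (nor is any available beyond \eqref{Meeq3}). The paper's device is to apply the derivation $L$ to $\mathrm{d}M^{\pm}=\mathrm{d}F^{\pm}\ast\mathrm{d}M_0$, splitting $\int_{1^-}^x\log u\,\mathrm{d}M^{\pm}$ into $(L\mathrm{d}M_0)\ast\mathrm{d}F^{\pm}$ plus $\mathrm{d}M_0\ast(L\mathrm{d}F^{\pm})$; both pieces are then bounded by straightforward absolute-value estimates using $\int_{1^-}^x\log u\,\mathrm{d}M_0(u)\ll x\log^{1-a}x$, $M_0(x)\ll x/\log^a x$, and the $\log^\varepsilon$ bounds above, giving $\int_{1^-}^x\log u\,\mathrm{d}M^{\pm}(u)\ll x\log^{1-b}x$ and hence \eqref{Meeq3.1} by partial integration. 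I would suggest abandoning the near/far decomposition and redoing the argument along these lines.
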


The remainder of this section is devoted to proving Proposition \ref{Mep2} and we keep $\mathrm{d}\Pi_0$, $\mathrm{d}E$, $\mathrm{d}M_0$, and $\mathrm{d}M^{\pm}$ as in its statement. Naturally, \eqref{Meeq4} amounts to the same as 
\begin{equation}
\label{Meeq3.2}E(x)=o\left(\frac{x}{\log x}\right).
\end{equation}
 We further define
$$
\mathrm{d}F^{\pm}=\ec(\pm \mathrm{d}E) \qquad \mbox{and} \qquad \mathrm{d}H^{\pm}= L \mathrm{d}F^{\pm}= \log \cdot\, \mathrm{d}F^{\pm}.
$$

We need  some bounds involving $\mathrm{d}F^{\pm}$ and $H^{\pm}$.
\begin{lemma}
\label{Mel2} For each $\varepsilon>0$,
$$
\int_{1^{-}}^{x} \frac{\mathrm{d}F^{+}(u)}{u}\ll \log^{\varepsilon}x  \qquad \mbox{and} \qquad H^{+}(x)\ll x\log^{\varepsilon}x
.
$$
\end{lemma}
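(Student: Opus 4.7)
The plan is to handle the two bounds separately, reducing the $H^+$ estimate to the $F^+$ one by exploiting the derivation identity \eqref{Meeq2.2}. For the first bound, I would expand the exponential into its Taylor series
\[
\int_{1^{-}}^{x}\frac{\mathrm{d}F^{+}(u)}{u}=\sum_{n=0}^{\infty}\frac{1}{n!}\int_{1^{-}}^{x}\frac{\mathrm{d}E^{\ast n}(u)}{u}.
\]
Because $\mathrm{d}E\geq 0$, the key sub-multiplicativity
\[
\int_{1^{-}}^{x}\frac{\mathrm{d}E^{\ast n}(u)}{u}\leq T(x)^{n},\qquad T(x):=\int_{1^{-}}^{x}\frac{\mathrm{d}E(u)}{u},
\]
follows by iterating the convolution formula over the region $u_{1},\ldots,u_{n}\geq 1$, $u_{1}\cdots u_{n}\leq x$ (which lies inside the cube $[1,x]^{n}$). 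Consequently the first integral is bounded by $e^{T(x)}$. A routine integration by parts gives
\[
T(x)=\frac{E(x)}{x}+\int_{1}^{x}\frac{E(u)}{u^{2}}\,\mathrm{d}u,
\]
and the hypothesis $E(u)=o(u/\log u)$ combined with $\int_{1}^{x}\mathrm{d}u/(u\log u)\sim \log_{2}x$ yields $T(x)=o(\log_{2}x)$. Hence $e^{T(x)}\ll \log^{\varepsilon}x$ for every $\varepsilon>0$.

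For the second bound, the naive estimate $H^{+}(x)\leq (\log x)F^{+}(x)$ loses a full $\log$ factor, so instead I would invoke the Chebyshev-type identity \eqref{Meeq2.2} to write $\mathrm{d}H^{+}=(L\mathrm{d}E)\ast \mathrm{d}F^{+}$. Then
\[
H^{+}(x)=\int_{1^{-}}^{x}\tilde{E}\!\left(\frac{x}{u}\right)\mathrm{d}F^{+}(u),\qquad \tilde{E}(y):=\int_{1^{-}}^{y}\log v\,\mathrm{d}E(v).
\]
Integration by parts shows $\tilde{E}(y)=E(y)\log y-\int_{1}^{y}E(v)/v\,\mathrm{d}v$, and each summand is $o(y)$ under \eqref{Meeq3.2} (the second one by comparison with $\int_{1}^{y}\mathrm{d}v/\log v\sim y/\log y$). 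In particular $\tilde{E}(y)\leq Cy$ on $[1,\infty)$, whence
\[
H^{+}(x)\leq Cx\int_{1^{-}}^{x}\frac{\mathrm{d}F^{+}(u)}{u}\ll x\log^{\varepsilon}x
\]
by the first bound.

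I expect no serious obstacle here; the main conceptual point is the right framing of $H^{+}$ through \eqref{Meeq2.2}, since estimating $H^{+}(x)$ directly as $\int_{1^{-}}^{x}\log u\,\mathrm{d}F^{+}(u)$ would spoil the target bound. The only technical care needed is the passage from the pointwise hypothesis $E(u)=o(u/\log u)$ to the integrated estimates $T(x)=o(\log_{2}x)$ and $\tilde{E}(y)=o(y)$, both routine integration-by-parts arguments.
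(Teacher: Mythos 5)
Your proposal is correct. The second half is essentially the paper's own argument: both invoke the derivation identity \eqref{Meeq2.2} to write $\mathrm{d}H^{+}=(L\mathrm{d}E)\ast\mathrm{d}F^{+}$, check via integration by parts that $\tilde{E}(y)=E(y)\log y-\int_{1}^{y}E(v)v^{-1}\,\mathrm{d}v=o(y)$, and then reduce $H^{+}(x)\ll x\int_{1^{-}}^{x}u^{-1}\mathrm{d}F^{+}(u)$ to the first bound. Where you genuinely diverge is in the first bound. The paper passes through the Mellin transform on the real axis: it shows $\int_{1^{-}}^{\infty}u^{-\sigma}\mathrm{d}E(u)=o\left(\log\frac{1}{\sigma-1}\right)$, exponentiates to get $\int_{1^{-}}^{\infty}u^{-\sigma}\mathrm{d}F^{+}(u)\ll(\sigma-1)^{-\varepsilon}$, and recovers the truncated estimate by a Rankin-type choice $\sigma=1+1/\log x$. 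You instead bound the truncated integral directly, using positivity of $\mathrm{d}E$ and the inclusion $\{u_{1}\cdots u_{n}\le x,\ u_{i}\ge 1\}\subseteq[1,x]^{n}$ to get $\int_{1^{-}}^{x}u^{-1}\mathrm{d}E^{\ast n}(u)\le T(x)^{n}$ and hence $\int_{1^{-}}^{x}u^{-1}\mathrm{d}F^{+}(u)\le e^{T(x)}$ with $T(x)=o(\log_{2}x)$; this is correct and arguably more elementary, since it never leaves the physical side. What the submultiplicativity route buys is transparency and the avoidance of any Tauberian step; what the paper's Mellin route buys is uniformity with the rest of the article (the same $\sigma\to1^{+}$ asymptotics reappear in Lemmas \ref{Mel4.2} and \ref{Mel4.3}) and applicability when one only controls transforms rather than pointwise positivity. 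Note also that your truncated integral $\int_{1^{-}}^{x}u^{-1}\mathrm{d}F^{+}(u)$ is the right object in the final step for $H^{+}$: the full integral over $[1,\infty)$ diverges in general, so your careful truncation there is not merely cosmetic.
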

\begin{proof} By \eqref{Meeq3.2},
$$
\int_{1^{-}}^{\infty} \frac{\mathrm{d}E(u)}{u^{\sigma}}= \sigma \int_{1}^{\infty} \frac{E(u)}{u^{\sigma+1}} \mathrm{d}u= o\left(-\log(\sigma-1) \right), \qquad \sigma\to 1^{+}.
$$
Hence,
\begin{equation}
\label{eqextra}
\int_{1^{-}}^{\infty} \frac{\mathrm{d}F^{+}(u)}{u^{\sigma}}=\exp\left( \int_{1^{-}}^{\infty} \frac{\mathrm{d}E(u)}{u^{\sigma}}\right)\ll \frac{1}{(\sigma-1)^{\varepsilon}}\: .
\end{equation}
An quick elementary Tauberian argument shows that 
$
\int_{1^{-}}^{x} u^{-1}\mathrm{d}F^{+}(u)\ll \log^{\varepsilon}x.
$
Indeed, taking $\sigma=1+1/\log x$ in \eqref{eqextra}, we obtain
\[\int_{1^{-}}^{x} \frac{\mathrm{d}F^{+}(u)}{u}\leq e \int_{1^{-}}^{\infty}\exp\left(-\frac{\log u}{\log x}\right)\frac{\mathrm{d}F^{+}(u)}{u}\ll \log^{\varepsilon} x.
\]

Noticing now that $\tilde{E}(x)=\int_{1^{-}}^{x} \log u \, \mathrm{d}E(u)= E(x) \log x-\int_{1}^{x} u^{-1}E(u)\, \mathrm{d}u=o(x)$ and employing property \eqref{Meeq2.2}, we deduce that
$$
H^{+}(x)= \int_{1^{-}}^{x} \mathrm{d}\tilde{E} \ast \mathrm{d}F^{+}= \int_{1^{-}}^{x} \tilde{E}\left(\frac{x}{u}\right)\mathrm{d}F^{+}(u)\ll x\int_{1^{-}}^{\infty} \frac{\mathrm{d}F^{+}(u)}{u}\ll x\log^{\varepsilon}x.
$$
\end{proof}

\begin{proof}[{Proof of Proposition \ref{Mep2}.}] We may assume that $0<a\leq 1$. We will show that
\begin{equation}
\label{Meeq3.3}
\int_{1^{-}}^{x} \log u \: \mathrm{d}M^{\pm}(u)\ll x\log ^{1-b}x,
\end{equation}
whence integration by parts yields \eqref{Meeq3.1}. In fact, the bound \eqref{Meeq3.3} implies
\[
M^{\pm}(x)\ll 1+ \frac{x}{\log^{b}x} +\int_{2}^{x} \left(-\frac{1}{\log u}\right)' u\log^{1-b}u\: \mathrm{d}u \ll \frac{x}{\log^{b}x}.
\]

Our starting point is applying the operator $L$ to the relation $\mathrm{d}M^{\pm}= \mathrm{d}F^{\pm} \ast \mathrm{d}M_0$, so that we obtain (cf. \eqref{Meeq2.1})
\[ 
\int_{1^{-}}^{x} \log u \: \mathrm{d}M^{\pm}(u)=  \int_{1^{-}}^{x} ( L \mathrm{d}M_{0}) \ast \mathrm{d}F^{\pm}+ \int_{1^{-}}^{x}  \mathrm{d}M_{0} \ast \mathrm{d}H^{\pm} =: I_{1}(x)+ I_2(x). 
\]
We keep  $0<\varepsilon<  a-b$.

To estimate $I_1(x)$, we first notice that 
\[\int_{1^{-}}^{x}\log u \: \mathrm{d}M_{0}(u)\ll x\log^{1-a}x.  \]
Using Lemma \ref{Mel2}, we find
\begin{align*}
I_1(x)&\ll \int_{1^{-}}^{x} (x/u)\log^{1-a} (x/u) \mathrm{d} F^{+}(u)
\\
&
\leq x \log^{1-a} x \int_{1^{-}}^{x} \frac{ \mathrm{d} F^{+}(u)}{u}
\\
& \ll x \log^{1-b}x.
\end{align*}

We now deal with $I_2$. Applying Lemma \ref{Mel2}, the hypothesis \eqref{Meeq3}, and integration by parts, we obtain
\begin{align*}
I_2(x)&\ll \int_{1^{-}}^{x} \frac{x/u}{\log^{a} (e+x/u)} \mathrm{d} H^{+}(u)
\\
&
\ll x\log^{\varepsilon} x+ \int_{1}^{x}\left(- \frac{x/u}{\log^{a} (e+x/u)} \right)' u\log^{\varepsilon} u \: \mathrm{d}u
\\
&
\ll x\log^{\varepsilon}x +  x \int_{1}^{x} \frac{\log^{\varepsilon} u}{\log^{a} (e+x/u)}  \frac {\mathrm{d}u}{u}
\\
&=x\log^{\varepsilon}x +  x \int_{1}^{x} \frac{\log^{\varepsilon} (x/u)}{\log^{a} (e+u)}  \frac {\mathrm{d}u}{u}
\\
&\leq x\log^{\varepsilon}x +  x \log^{\varepsilon}x  \int_{1}^{x} \frac{ \mathrm{d}u}{u\log^{a} (e+u)}  \ll x \log^{1-b}x.
\end{align*}

\end{proof}

We end this section with two remarks.

\begin{remark}
\label{Mer1} Let $\ell$ be a Karamata slowly varying function \cite{bgt,korevaarbook}.
If we replace \eqref{Meeq3.0} in Lemma \ref{Mel1} by the stronger hypothesis 
\begin{equation}
\label{Meeq3.5}
\int_{1^{-}}^{\infty}\frac{|\mathrm{d}(G_1-G_2)(u)|}{u^{\sigma_{0}}}<\infty \qquad \mbox{for some } \sigma_0<1,
\end{equation}
 then $M_1(x)=O(x\ell(x))$ if and only if $M_2(x)=O(x\ell(x))$. This follows as in the proof of Lemma \ref{Mel1} if we make use of Potter's estimates \cite[p.~25]{bgt}.
\end{remark}
\begin{remark}
\label{Mer2} 
If one strengthens the condition \eqref{Meeq5} in Theorem \ref{Meth1} to
\begin{equation}
\label{Meeq3.6}
\int_{1^{-}}^{\infty}\frac{|\mathrm{d}R(u)|}{u^{\sigma_{0}}}<\infty \qquad \mbox{for some } \sigma_0<1,
\end{equation}
then the stronger bound ($b<\min\{a,1\}$)
$$
M(x)= O\left(\frac{x}{\log^{b}x}\right)
$$
 must hold true, as one infers from Remark \ref{Mer1} and Proposition \ref{Mep2}.
\end{remark}

\section{Proof of Proposition \ref{Mep1}}\label{Sec 4 Me}
Set 
$$
\mathrm{d}A(u)= \frac{\chi_{[e^{e}, \infty)}(u)}{\log u \log_{2} u}  \mathrm{d}u, \quad
\mathrm{d}B^{\pm}=\ec(\pm \mathrm{d}A), \quad \mbox{and } \quad
m_{K}(x)=\int_{1^{-}}^{x} \frac{\mathrm{d}M_{K}(u)}{u}. 
$$
We will prove Proposition \ref{Mep1} by studying the asymptotic behavior of $B^{\pm}$. The functions $m_K$ and $B^{-}$
are closely connected.

\begin{lemma}
\label{Mel4.1} We have
$$
m_{K}(x)=\frac{B^{-}(x)}{x}.
$$
\end{lemma}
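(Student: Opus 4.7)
The plan is to exploit the additive decomposition $\mathrm{d}\Pi_{K} = \mathrm{d}\Pi_{0} + \mathrm{d}A$, where $\mathrm{d}\Pi_{0}(u) = (1 - u^{-1})/\log u \, \mathrm{d}u$ is the normalized logarithmic integral density (the first summand of $\mathrm{d}\Pi_{K}$ in \eqref{Meeq2}). The exponential law recalled in Section~\ref{Preli Me} then yields
\[
\mathrm{d}M_{K} = \ec(-\mathrm{d}\Pi_{0}) \ast \ec(-\mathrm{d}A) = \mathrm{d}M_{0} \ast \mathrm{d}B^{-},
\]
with $\mathrm{d}M_{0} := \ec(-\mathrm{d}\Pi_{0})$.

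The key ingredient is the closed form $M_{0}(x) = 1 - \log x$ for $x \geq 1$. I would verify it by a Mellin-type computation: Frullani's integral gives
\[
\int_{1^{-}}^{\infty} u^{-s}\,\mathrm{d}\Pi_{0}(u) = \int_{0}^{\infty} \frac{e^{-(s-1)v} - e^{-sv}}{v}\,\mathrm{d}v = \log\frac{s}{s-1}, \qquad \Re s > 1,
\]
whence $\int_{1^{-}}^{\infty} u^{-s}\,\mathrm{d}M_{0}(u) = (s-1)/s = 1 - 1/s$, which inverts to $\mathrm{d}M_{0} = \delta_{1} - \mathrm{d}u/u$ on $[1, \infty)$. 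Alternatively, one may recognize $\Pi_{0}$ as the ``continuous PNT'' prime distribution, whose associated generalized integer function is $N_{0}(x) = x$, and then check $\mathrm{d}M_{0} \ast \mathrm{d}N_{0} = \delta_{1}$ by direct convolution.

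Once $M_{0}$ is explicit, the rest is routine. Unfolding $\mathrm{d}M_{K} = \mathrm{d}M_{0} \ast \mathrm{d}B^{-}$ as a double integral,
\[
m_{K}(x) = \int_{1^{-}}^{x} \frac{\mathrm{d}M_{K}(v)}{v} = \iint_{1 \leq tu \leq x} \frac{\mathrm{d}M_{0}(t)\,\mathrm{d}B^{-}(u)}{tu} = \int_{1^{-}}^{x} \frac{\mathrm{d}B^{-}(u)}{u} \int_{1^{-}}^{x/u} \frac{\mathrm{d}M_{0}(t)}{t}.
\]
The inner integral equals $1 - \int_{1}^{x/u} t^{-2}\,\mathrm{d}t = u/x$, so the factors of $u$ cancel and we arrive at $m_{K}(x) = B^{-}(x)/x$. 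The only step with any content is the identification of $M_{0}$; after that, everything is multiplicative bookkeeping.
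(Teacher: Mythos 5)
Your proposal is correct and follows essentially the same route as the paper: both rest on the decomposition $\mathrm{d}M_{K}=\mathrm{d}M_{0}\ast\mathrm{d}B^{-}$ together with the identity $\ec(-\mathrm{d}\Pi_{0})=\delta_{1}-\mathrm{d}u/u$ (which the paper quotes as well known and you verify via Frullani), after which the inner integral $\int_{1^{-}}^{x/u}t^{-1}\,\mathrm{d}M_{0}(t)=u/x$ gives the cancellation. The only cosmetic difference is that you unfold the double integral directly, whereas the paper phrases the same computation as a convolution of the measures $u^{-1}\mathrm{d}M_{0}$ and $u^{-1}\mathrm{d}B^{-}$.
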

\begin{proof}
It is well known (cf. \cite{diamond-zhangbook}) that
\[
\ec\left(- \frac{1-u^{-1}}{\log u}\, \mathrm{d}u \right)= \delta_1 -\frac{\mathrm{d}u}{u},
\]
where $\delta_1$ is the Dirac delta concentrated at 1. Hence,
$$\frac{1}{u}\ec\left(- \frac{1-u^{-1}}{\log u}\, \mathrm{d}u \right)= \delta_1 -\frac{\mathrm{d}u}{u^{2}}$$
has primitive $1/x$. Using this formula, one obtains
\[
\int_{1^{-}}^{x} \frac{ \mathrm{d}{M_{K}(u)}}{u}= \int_{1^{-}}^{x}\left( \delta_1 -\frac{\mathrm{d}u}{u^{2}}\right) \ast \left(\frac{ \mathrm{d}{B^{-}(u)}}{u}\right)= \int_{1^{-}}^{x} \frac{u}{x} \cdot \frac{ \mathrm{d}{B^{-}(u)}}{u}= \frac{B^{-}(x)}{x}.
\]
\end{proof}

The verification of the next lemma is left to the reader.

\begin{lemma}
\label{Mel4.2} There are constants $c_1$ and $c_2$ 
such that, as $\sigma\to 1^{+}$, 
\[
\int_{e^{e}}^{\infty} \frac{\mathrm{d}A(u)}{u^{\sigma}} = \log_2\left(\frac{1}{\sigma-1}\right) +c_1 + \frac{c_2}{\log \left(\frac{1}{\sigma-1}\right)}+ o\left(\frac{1}{\log \left(\frac{1}{\sigma-1}\right)}\right).
\]
\end{lemma}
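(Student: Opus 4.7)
My plan is to compute the Mellin-type integral explicitly via two substitutions, and then extract its asymptotic expansion in $L := \log(1/(\sigma-1))$ by splitting the resulting integral into a ``head'' and a ``tail''. Setting first $v = \log u$ and then $x = (\sigma-1)v$ transforms the integral into
\[
\int_{e^e}^\infty \frac{\mathrm{d}u}{u^{\sigma}\log u\,\log_2 u} \;=\; \int_{(\sigma-1)e}^\infty \frac{e^{-x}}{x(L+\log x)}\,\mathrm{d}x.
\]
The lower endpoint $(\sigma-1)e = e\cdot e^{-L}$ is chosen precisely so that $L+\log x = 1$ there; the apparent singularity of $1/(L+\log x)$ therefore lies just outside the domain, and the smallness of $L+\log x$ near this endpoint is the source of the $\log L = \log_2(1/(\sigma-1))$ divergence.

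I would then split at $x=1$. On the tail $[1,\infty)$, $L+\log x \geq L$ and $e^{-x}/x$ is integrable, so the expansion $1/(L+\log x) = 1/L + O(\log x/L^2)$ integrates against $e^{-x}/x$ to yield a contribution $E_1(1)/L + O(1/L^2)$, where $E_1(1) := \int_1^\infty e^{-x}/x\,\mathrm{d}x$. On the head $[(\sigma-1)e,1]$, I write $e^{-x} = 1 + (e^{-x}-1)$. The substitution $y = L+\log x$ converts the ``$1$'' piece into $\int_1^L \mathrm{d}y/y = \log L$ exactly, thus capturing the main term. The same substitution followed by $z = L - y$ recasts the ``$e^{-x}-1$'' piece as
\[
-\int_0^{L-1} \frac{\varphi(z)}{L-z}\,\mathrm{d}z,\qquad \varphi(z) := 1 - e^{-e^{-z}}.
\]

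Since $\varphi$ is bounded on $[0,\infty)$ and decays like $e^{-z}$ at infinity, the range $z\geq L/2$ contributes only an exponentially small amount, and on $[0,L/2]$ the expansion $1/(L-z) = 1/L + O(z/L^2)$ is uniform. This reduces the last integral to $-c_0/L + O(1/L^2)$ with $c_0 := \int_0^\infty(1-e^{-e^{-z}})\,\mathrm{d}z$, and the substitution $t = e^{-z}$ together with the classical identity $E_1(1)+\gamma = \int_0^1 (1-e^{-t})/t\,\mathrm{d}t$ identifies $c_0 = E_1(1)+\gamma$. Summing the three pieces,
\[
\int_{e^e}^\infty \frac{\mathrm{d}A(u)}{u^\sigma} = \log L + \frac{E_1(1)-c_0}{L} + O\!\left(\frac{1}{L^2}\right) = \log L - \frac{\gamma}{L} + o\!\left(\frac{1}{L}\right),
\]
so the lemma holds with $c_1 = 0$ and $c_2 = -\gamma$.

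The main obstacle is the head integral near its lower endpoint, where $L+\log x$ is of order unity so that no direct $1/L$ expansion is available; one must exploit the vanishing of $e^{-x}-1$ to tame the integrand. The composite substitution $z = -\log x$ transports this bad region to large $z$, where the exponential decay of $\varphi$ controls the singularity and legitimises the $1/L$ expansion on the bounded region of true integration.
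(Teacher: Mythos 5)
Your computation is correct and complete: the substitutions $v=\log u$, $x=(\sigma-1)v$, the split at $x=1$, the decomposition $e^{-x}=1+(e^{-x}-1)$, and the treatment of $-\int_0^{L-1}\varphi(z)/(L-z)\,\mathrm{d}z$ all check out, and the identity $\int_0^1(1-e^{-t})t^{-1}\,\mathrm{d}t=E_1(1)+\gamma$ correctly yields $c_1=0$, $c_2=-\gamma$. The paper explicitly leaves the verification of this lemma to the reader, so there is no proof to compare against; your argument in fact proves slightly more than is claimed, namely explicit values of the constants and an error term $O(1/L^{2})$ in place of $o(1/L)$.
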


We then have,

\begin{lemma}
\label{Mel4.3}  The bound
\begin{equation}
\label{Meeq4.1}
S(x):=\int_{1^{-}}^{x} \frac{\mathrm{d}B^{-}(u)}{u}=o(1)
\end{equation}
holds.
\end{lemma}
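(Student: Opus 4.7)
My plan is to identify $\mathrm{d}S:=\mathrm{d}B^-/u$ as an explicit $\ec$-measure, extract the asymptotic of its Mellin transform from Lemma~\ref{Mel4.2}, and conclude via a Tauberian argument.

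First, since pointwise multiplication by $u^{-1}$ is a character for the multiplicative convolution of measures (i.e.\ $u^{-1}(\mathrm{d}\mu\ast\mathrm{d}\nu)=(u^{-1}\mathrm{d}\mu)\ast(u^{-1}\mathrm{d}\nu)$), setting $\mathrm{d}\alpha(u):=u^{-1}\mathrm{d}A(u)$ I would obtain
\[ \mathrm{d}S(u)=\frac{\mathrm{d}B^-(u)}{u}=\ec(-\mathrm{d}\alpha), \qquad \mathrm{d}\alpha(u)=\frac{\chi_{[e^e,\infty)}(u)}{u\log u\log_2 u}\mathrm{d}u, \]
with $\alpha(x)=\log_3 x$ for $x\geq e^e$. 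For $s>0$, the Mellin transform factorises as $\hat S(s)=\exp(-\hat\alpha(s))$ with $\hat\alpha(s)=\hat A(s+1)$, so Lemma~\ref{Mel4.2} applied with $\sigma=s+1$ yields $\hat\alpha(s)=\log_2(1/s)+c_1+O(1/\log(1/s))$ and hence $\hat S(s)\to 0$ as $s\to 0^+$.

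To pass from $\hat S(s)\to 0$ to $S(x)\to 0$, I would compare with the non-decreasing auxiliary $S^+(x):=\int_{1^-}^x\ec(\mathrm{d}\alpha)$. Its Mellin transform $\hat S^+(s)=\exp(\hat\alpha(s))\sim e^{c_1}\log(1/s)$ is a slowly varying function of $1/s$; Karamata's Tauberian theorem applied in the logarithmic variable $t=\log x$ then delivers $S^+(x)\sim e^{c_1}\log_2 x$. Since $|\ec(-\mathrm{d}\alpha)|\leq\ec(\mathrm{d}\alpha)$ term by term, one obtains both the a priori bound $|S(x)|\leq S^+(x)$ and, for any $\lambda>1$, the comparison $|S(\lambda x)-S(x)|\leq S^+(\lambda x)-S^+(x)$. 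This is the slow-oscillation input needed to invoke a Schmidt-type Tauberian theorem for signed measures, producing $S(x)=o(1)$.

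The main obstacle is making this Tauberian step fully rigorous. Karamata supplies only first-order equivalence for $S^+$, whereas controlling $S(\lambda x)-S(x)$ to order $o(1)$ really requires differences of $S^+$ at precision $o(1)$ for $\lambda$ near $1$; bridging this gap likely requires either a second-order (de Haan) refinement of the Mellin estimate (extractable from the $c_2$-term in Lemma~\ref{Mel4.2}, which strengthens $\hat S^+(s)$ to $e^{c_1}\log(1/s)+O(1)$), or a direct exploitation of the convolution identity $\mathrm{d}S\ast\mathrm{d}S^+=\delta_1$, rewritten as $\int_{1^-}^x S(x/u)\,\mathrm{d}S^+(u)=1$ and combined with $S^+(\infty)=\infty$ to force any candidate limit of $S$ to vanish.
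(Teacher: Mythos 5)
Your reduction is sound: multiplication by $u^{-1}$ is indeed an algebra homomorphism for the multiplicative convolution, so $\mathrm{d}S=u^{-1}\mathrm{d}B^{-}=\ec(-\mathrm{d}\alpha)$ with $\mathrm{d}\alpha=u^{-1}\mathrm{d}A$, and Lemma \ref{Mel4.2} gives the Abelian statement $\hat{S}(s)=\exp(-\hat{A}(s+1))\to 0$ as $s\to 0^{+}$, which is exactly the first display of the paper's proof. The genuine gap is the one you flag yourself: the Tauberian step is not carried out, and of your two suggested repairs only the first works. The convolution identity $\int_{1^{-}}^{x}S(x/u)\,\mathrm{d}S^{+}(u)=1$ together with $S^{+}(\infty)=\infty$ can only show that \emph{if} $S(x)$ converges then its limit is $0$; it gives no handle on the existence of the limit, which is the whole difficulty. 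The de Haan route does close the gap, but note that Karamata's first-order conclusion $S^{+}(x)\sim e^{c_{1}}\log_{2}x$ is strictly insufficient: it only yields $S^{+}(\lambda x)-S^{+}(x)=o(\log_{2}x)$, useless as a slow-oscillation bound. You need the full strength of Lemma \ref{Mel4.2}: since $\hat{\alpha}(s)=\log_{2}(1/s)+c_{1}+c_{2}/\log(1/s)+o(1/\log(1/s))$, exponentiation gives $\hat{S}^{+}(s)=b_{1}\log(1/s)+b_{2}+o(1)$ with $b_{1}=e^{c_{1}}$ and $b_{2}=c_{2}e^{c_{1}}$ (an $o(1)$ error, not merely $O(1)$ as you wrote --- de Haan's theorem requires this). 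Applying de Haan's second-order Tauberian theorem to the non-decreasing $S^{+}$ then yields $S^{+}(x)=b_{1}\log_{2}x+b_{2}+b_{1}\gamma+o(1)$, whence $S^{+}(\lambda x)-S^{+}(x)=o(1)$ for each fixed $\lambda>1$; your domination $|S(\lambda x)-S(x)|\leq S^{+}(\lambda x)-S^{+}(x)$ makes $t\mapsto S(e^{t})$ slowly oscillating, and the classical Tauberian theorem of R.~Schmidt applied to $\hat{S}(s)=s\int_{0}^{\infty}S(e^{t})e^{-st}\,\mathrm{d}t\to 0$ delivers $S(x)=o(1)$.

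Once completed this way, your argument is a legitimate alternative to the paper's, which sidesteps any Tauberian theorem for signed quantities: the paper introduces the non-negative measure $\mathrm{d}B=2\cosh^{\ast}(\mathrm{d}A)=\mathrm{d}B^{+}+\mathrm{d}B^{-}$, observes that $\int_{1^{-}}^{\infty}u^{-\sigma}\mathrm{d}B(u)$ and $\int_{1^{-}}^{\infty}u^{-\sigma}\mathrm{d}B^{+}(u)$ share the expansion $b_{1}\log(1/(\sigma-1))+b_{2}+o(1)$, applies de Haan twice to the two non-decreasing integrals $\int_{1^{-}}^{x}u^{-1}\mathrm{d}B(u)$ and $\int_{1^{-}}^{x}u^{-1}\mathrm{d}B^{+}(u)$, and subtracts. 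Both routes consume the same analytic input (the $c_{2}$-term of Lemma \ref{Mel4.2} feeding a second-order Tauberian theorem); the paper's is slightly cleaner in that it needs only the standard de Haan theorem for monotone functions, while yours additionally invokes a slowly-oscillating Tauberian theorem but makes the role of $S^{+}$ as an oscillation majorant for the signed $S$ more transparent.
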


\begin{proof} In view of Lemma \ref{Mel4.2},
\[\int_{1^{-}}^{\infty} \frac{\mathrm{d}B^{-}(u)}{u^{\sigma}}= \exp\left(-\int_{1^{-}}^{\infty} \frac{\mathrm{d}A^{-}(u)}{u^{\sigma}}\right)= o(1)\]
and 
\[\int_{1^{-}}^{\infty} \frac{\mathrm{d}B^{+}(u)}{u^{\sigma}}= \exp\left(\int_{1^{-}}^{\infty} \frac{\mathrm{d}A^{+}(u)}{u^{\sigma}}\right)= b_1\log\left(\frac{1}{\sigma-1}\right) + b_2+ o(1)\]
for some $b_1$ and $b_2$ and where the asymptotic formulas are as $\sigma\to1^{+}$. The measure
\[\mathrm{d}B= 2\cosh^{\ast}(\mathrm{d}A):=\mathrm{d}B^{+}+\mathrm{d}B^{-}= 2\sum_{n=0}^{\infty} \frac{\mathrm{d}A^{\ast 2n}}{(2n)!}\]
is non-negative and its Mellin transform has the same real asymptotic behavior as that of the Mellin transform of $\mathrm{d}B^{+}$, that is, 
\[\int_{1^{-}}^{\infty} \frac{\mathrm{d}B(u)}{u^{\sigma}}= \int_{1^{-}}^{\infty} \frac{\mathrm{d}B^{+}(u)}{u^{\sigma}}+o(1)= b_1\log\left(\frac{1}{\sigma-1}\right) + b_2+ o(1).\]

We now apply de Haan's second order regular variation theory, which allows us to conclude (cf. \cite[Theorem~3.9.1, p.~172]{bgt} or \cite[Theorem~IV.12.1, p.~200]{korevaarbook}) that 
$$\int_{1^{-}}^{x} \frac{\mathrm{d}B^{+}(u)}{u}= b_1\log_2 x + b_2+b_1\gamma+o(1)$$
and
\[ \quad \int_{1^{-}}^{x} \frac{\mathrm{d}B(u)}{u}= b_1\log_2 x + b_2+b_1\gamma+o(1),\]
with $\gamma$ the Euler-Mascheroni constant, whence \eqref{Meeq4.1} follows.

\end{proof}

We are ready to show Proposition \ref{Mep1}.
\begin{proof}[Proof of Proposition \ref{Mep1}]

We will establish the bound
\begin{equation}\label{Meeq4.2}
B^{-}(x)=o\left(\frac{x }{\log x }\right).
\end{equation}
Proposition \ref{Mep1} is an easy consequence of \eqref{Meeq4.2}. Indeed, by Lemma \ref{Mel4.1} and \eqref{Meeq4.2}, $m_{K}(x)=o(1/\log x)$. We would therefore obtain
$$M_{K}(x)=\int_{1^{-}}^{x} u\mathrm{d}m_{K}(u) = xm_{K}(x)- \int_{1^{-}}^{x} m_{K}(u)\mathrm{d}u=o\left(\frac{x}{\log x}\right).
$$

To prove \eqref{Meeq4.2}, we slightly adapt the argument employed at the end of the proof of Lemma \ref{Mel2}. Let 
$\mathrm{d}G(u)= L\mathrm{d}A(u)$. Observe that 
\[G(x)= \frac{x}{\log_{2}x} +O\left(\frac{x}{\log x\log^{2}_{2}x}\right) .\] Using \eqref{Meeq4.1}, Chebyshev's identity (cf. \eqref{Meeq2.2}), and integration by parts,
\begin{align*}
\int_{1}^{x} \log u\: \mathrm{d}B^{-}(u)&= - \int_{1^-}^{x} \mathrm{d}G \ast \mathrm{d}B^{-}
= - x \int_{1^-}^{x} \frac{G(x/u)}{x/u} \frac{\mathrm{d}B^{-}(u)}{u}
\\
&=  \int_{1^-}^{x} \frac{x}{u} S\left(\frac{x}{u}\right)\left(\frac{{G}(u)}{u}\mathrm{d}u -\mathrm{d}G(u)\right)
\\
&
\ll \int_{e^e}^{x} o\left(\frac{x}{u}\right)  \frac{\mathrm{d}u}{\log u\log^{2}_{2}u}=o(x).
\end{align*}
Finally,
\begin{align*}
B^{-}(x)&=\frac{1}{\log x}\int_{1}^{x} \log u\: \mathrm{d}B^{-}(u) + \int_{1}^{x} \left(\int_{1}^{t} \log u\: \mathrm{d}B^{-}(u) \right) \frac{ \mathrm{d}t}{t\log^{2}t}
\\
&= 
o\left(\frac{x }{\log x }\right) + o\left(\frac{x }{\log^{2} x }\right) = o\left(\frac{x }{\log x }\right).
\end{align*}

\end{proof}

\end{document}